      \newtheorem{assumption}{Assumption}
      \newtheorem{theorem}{\bf{Theorem}}[section]
      \newtheorem{definition}{\bf{Definition}}
      \newtheorem{lemma}[theorem]{\bf{Lemma}}
      \newtheorem{remark}{\bf{Remark}}
\DeclareMathOperator*{\argmin}{arg\,min}
\DeclareMathOperator*{\Minimize}{\textsf{Minimize}}
\title{\LARGE \bf
Attack-Resilient Weighted $\ell_1$ Observer with Prior Pruning
}
\author{Yu Zheng$^{1}$ and Olugbenga Moses Anubi$^{1}$

\thanks{$^{1}$Yu Zheng and Olugbenga Moses Anubi are with the Department of Electrical and Computer Engineering, Center for Advanced Power systems (CAPS), Center for Intelligent Systems Control and Robotics (CISCOR), Florida State University, Tallahassee, FL 32310, USA (e-mail: yz19b@fsu.edu, oanubi@fsu.edu)}
}
\begin{document}

\maketitle
\thispagestyle{empty}
\pagestyle{empty}

\begin{abstract}
 Security related questions for Cyber Physical Systems (CPS) have attracted much research attention in searching for novel methods for attack-resilient control and/or estimation. Specifically, false data injection attacks (FDIAs) have been shown to be capable of bypassing bad data detection (BDD), while arbitrarily compromising the integrity of state estimators and robust controller even with very sparse measurements corruption. Moreover, based on the inherent sparsity of pragmatic attack signals, \emph{$\ell_1$-minimization} scheme has been used extensively to improve the  design of attack-resilient estimators. For this, the theoretical maximum for the percentage of compromised nodes that can be accommodated has been shown to be $50\%$. In order to guarantee correct state recoveries for larger percentage of attacked nodes, researchers have begun to incorporate \emph{prior} information into the underlying resilient observer design framework. For the most pragmatic cases, this \emph{prior} information is often obtained through some data-driven machine learning process. Existing results have shown strong positive correlation between the tolerated attack percentages and the precision of the \emph{prior} information. In this paper, we present a \emph{pruning} method to improve the precision of the \emph{prior} information, given corresponding stochastic uncertainty characteristics of the underlying machine learning model. Then a \emph{weighted $\ell_1$-minimization} is proposed based on the \emph{pruned prior}. The theoretical and simulation results show that the pruning method significantly improves the observer performance for much larger attack percentages, even when moderately accurate machine learning model used. 

\end{abstract}

\section*{NOTATION} \label{Sec: Notation}
The following notations and definitions are used throughout the whole paper: ${\mathbb R}, {\mathbb R}^n, {\mathbb R}^{n \times m}$ denote the space of real numbers, real vectors of length $n$ and real matrices of $n$ rows and $m$ columns respectively. ${\mathbb R}_{+}$ denotes positive real numbers. Normal-face lower-case letters $(e.g.\hspace{1mm} x \in {\mathbb R})$ are used to represent real scalars, bold-face lower-case letter $(e.g.\hspace{1mm} \mathbf{x} \in {\mathbb R}^n)$ represents vectors, while normal-face upper case $(e.g.\hspace{1mm} X \in {\mathbb R}^{n \times m})$ represents matrices. Let $\mathscr{T} \subseteq \{1,\dots, n\}$, then for a matrix $X \in {\mathbb R}^{m \times n}$, $X_{\mathscr{T}} \in {\mathbb R}^{|\mathscr{T}| \times n}$ is the sub-matrix obtained by extracting the rows of $X$ corresponding to the indices in $\mathscr{T}$. $\mathscr{T}^c$ denotes the complement of a set $\mathscr{T}$ and the universal set on which it is defined will be clear from the context. The support of a vector $\mathbf{x}$ is denoted by $\textsf{supp}(\mathbf{x}) \triangleq \{i|\mathbf{x}_{i} \neq 0\}$.  The set $\mathscr{S}_k^m \subset \mathbb{R}^n$ denotes the set of all vectors $\mathbf{v} \in \mathbb{R}^m$ such that $|\textsf{supp}(\mathbf{v})|\leq k$ (i.e the subset of $k$-sparse vectors). The \emph{best $k$-sparse approximation error} for $\mathbf{e} \in \mathscr{S}^m_{k}$ is given by
\begin{equation}  \label{equ:best_k_sparse_estimation_error}
    \sigma_k(\mathbf{e}) \triangleq \min_{\mathbf{z} \in \mathscr{S}^n_k} \|\mathbf{e}-\mathbf{z}\|_1 =\|\mathbf{e}_{\mathscr{T}^c}\|_1,
\end{equation}
where $\mathscr{T}$ is the support of $\mathbf{e}_i$ with first $k$ largest magnitude. The symbol $*$ denotes the convolution operator for vectors. The symbol $\circ$ denotes element-wise multiplication of two vectors and is defined as $\mathbf{z} = \mathbf{x} \circ \mathbf{y}$, where $\mathbf{z}_{i} = \mathbf{x}_{i} \cdot \mathbf{y}_{i}$. $\mathscr{B}(1,\mathbf{p}_i)$ denotes Bernoulli distributed variables with known independent probability $\mathbf{p}_i$.

\if false
and the probability mass function of sum of these variables is given in next Lemma.
\begin{lemma}(\cite{fernandez2010closed})  \label{Lem:PBD_CDF}
Given mutually independent Bernoulli random variables $\epsilon_i \sim \mathscr{B}(1,\mathbf{p}_i)$, $i=1,\cdots,N$, then
  \begin{equation}\label{equ:PMF}
      \textsf{Pr}\left\{\sum_{i=1}^{N} \epsilon_{i} = k-1 \right\}=\mathbf{r}(k), k=1,\cdots,N+1
  \end{equation}
  where,
  $$ \mathbf{r} = \beta \cdot \begin{bmatrix}-\mathbf{s}_1\\1 \end{bmatrix} * \begin{bmatrix}-\mathbf{s}_2\\1 \end{bmatrix} * \cdots * \begin{bmatrix}-\mathbf{s}_{N}\\1 \end{bmatrix}
  $$
  with
  $ \beta =\displaystyle\prod_{i=1}^{N} \mathbf{p}_{i}, \hspace{2mm} \mathbf{s}_i = -\frac{1-\mathbf{p}_{i}}{\mathbf{p}_{i}}
  $
\end{lemma}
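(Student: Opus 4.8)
The plan is to identify the convolution structure on the right-hand side with the product of \emph{probability generating functions} (PGFs) of the individual Bernoulli variables. The key fact is that multiplication of polynomials corresponds exactly to convolution of their coefficient sequences, so the statement is essentially the coefficient-wise reading of a product of degree-one polynomials.

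First I would write the PGF of each $\epsilon_i \sim \mathscr{B}(1,\mathbf{p}_i)$ as $G_i(t) = \mathbb{E}[t^{\epsilon_i}] = (1-\mathbf{p}_i) + \mathbf{p}_i t$, whose coefficient vector, ordered from the constant term upward, is $\begin{bmatrix}1-\mathbf{p}_i & \mathbf{p}_i\end{bmatrix}^\top$. By mutual independence, the PGF of $S = \sum_{i=1}^N \epsilon_i$ factors as $G_S(t) = \prod_{i=1}^N G_i(t) = \prod_{i=1}^N \left[(1-\mathbf{p}_i) + \mathbf{p}_i t\right]$, and the coefficient of $t^{k-1}$ in this polynomial is exactly $\textsf{Pr}\{S = k-1\}$. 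Translating the product of polynomials into the convolution of their coefficient vectors yields
\begin{equation*}
    \mathbf{r} = \begin{bmatrix}1-\mathbf{p}_1\\ \mathbf{p}_1\end{bmatrix} * \cdots * \begin{bmatrix}1-\mathbf{p}_N\\ \mathbf{p}_N\end{bmatrix}.
\end{equation*}
The only remaining step is to reconcile this with the stated factored form. Factoring $\mathbf{p}_i$ out of the $i$-th vector gives $\begin{bmatrix}1-\mathbf{p}_i & \mathbf{p}_i\end{bmatrix}^\top = \mathbf{p}_i \begin{bmatrix}(1-\mathbf{p}_i)/\mathbf{p}_i & 1\end{bmatrix}^\top = \mathbf{p}_i \begin{bmatrix}-\mathbf{s}_i & 1\end{bmatrix}^\top$, using $\mathbf{s}_i = -(1-\mathbf{p}_i)/\mathbf{p}_i$. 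Since convolution is bilinear, the scalar factors $\mathbf{p}_i$ pull out of every term and collect into $\beta = \prod_{i=1}^N \mathbf{p}_i$, producing precisely the claimed expression.

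I anticipate the main obstacle to be purely bookkeeping rather than conceptual: one must verify that the convolution indexing agrees with the polynomial-degree ordering (constant term first), so that the resulting length-$(N+1)$ vector $\mathbf{r}$ is indexed as $\mathbf{r}(k) = \textsf{Pr}\{S = k-1\}$ for $k = 1, \dots, N+1$, and that the factorization through $\mathbf{s}_i$ is well defined, which tacitly requires $\mathbf{p}_i \neq 0$. A fully self-contained alternative avoids generating functions altogether: define $\mathbf{r}^{(n)}$ as the PMF of the partial sum $\sum_{i=1}^n \epsilon_i$ and induct on $n$, where the inductive step is the total-probability recursion $\textsf{Pr}\{S_{n+1}=j\} = (1-\mathbf{p}_{n+1})\,\textsf{Pr}\{S_n=j\} + \mathbf{p}_{n+1}\,\textsf{Pr}\{S_n=j-1\}$, which is exactly convolution with $\begin{bmatrix}1-\mathbf{p}_{n+1} & \mathbf{p}_{n+1}\end{bmatrix}^\top$; peeling off the scalar factors then gives the same conclusion.
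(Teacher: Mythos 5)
Your argument is correct. Note that the paper itself gives no proof of this lemma: it is stated as a quoted result with a citation to the reference, and in fact the lemma sits inside a disabled \verb|\if false| block in the source. Your probability-generating-function derivation is essentially the argument behind the cited closed form: the coefficient vector $\begin{bmatrix}-\mathbf{s}_i & 1\end{bmatrix}^\top$ is exactly $\mathbf{p}_i^{-1}\begin{bmatrix}1-\mathbf{p}_i & \mathbf{p}_i\end{bmatrix}^\top$, i.e.\ the monic factor $(t-\mathbf{s}_i)$ of the PGF $\prod_i\bigl(\mathbf{p}_i t + (1-\mathbf{p}_i)\bigr)$, and $\beta$ restores the leading coefficient. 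Your two caveats are both benign here: the constant-term-first indexing gives $\mathbf{r}(k)=\textsf{Pr}\{S=k-1\}$ as claimed, and the division by $\mathbf{p}_i$ is licensed because the paper's uncertainty model assumes $\mathbf{p}_i\in(0,1]$. The inductive total-probability recursion you sketch as an alternative is a fine self-contained substitute that also avoids the $\mathbf{p}_i\neq 0$ issue until the final factored rewriting.
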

\fi

\section{INTRODUCTION}\label{Sec: Introduction}
Cyber-physical System has application potential in various areas \cite{lee2016introduction}. 
The authors in \cite{rajkumar2010cyber} pointed out that the ideal CPS must operate dependably, safely, securely, efficiently and in real-time.

Security questions in CPSs are more challenging than traditional IT security because of the combination of temporal dynamics brought by the physical environment and the heterogeneous nature of the operation of CPSs \cite{khaitan2014design}. Failure of CPS is more complicated than random failures or well-defined uncertainty for which many results exist on reliability and robustness, since they may be caused by stealth malicious attacks. One of such powerful deception attacks, named \emph{false data injection attack} (FDIA), has shown ability to bypass \emph{bad data detector} (BDD), while compromising the integrity of observer and robust controller with sparse measurements corruption \cite{mo2010false, anubi2018robust}. Consequently, much research attention have been directed to develop appropriate protection schemes.


Active detection approaches have been considered \cite{weerakkody2019resilient}, where the defenders adjust the detection rules online in order to identify the attack scenarios.  There are some \emph{machine learning} algorithms being considered to localize the attacks, such as \emph{Gaussian process regression} \cite{anubi2019enhanced}, \emph{support vector machine}\cite{ozay2015machine}, \emph{markov graphs}\cite{sedghi2015statistical}, \emph{generative adversarial networks}\cite{mao2017least} and more. These \emph{machine learning} localization algorithms generate estimated support of attacked (or safe) nodes. This can then be used as a \emph{prior} information for a resilient estimation program.


Due to sparsity assumption of the attack vector $|\textsf{supp}(\mathbf{e})| \leq k$, the resilient estimation problem has been cast as a classical error correction problem \cite{anubi2018robust, anubi2019resilient}. Consider a linear observation model $\mathbf{y} = H\mathbf{x}+\mathbf{e}$, where $H \in \mathbb{R}^{N\times n}$ denotes an observation matrix, then the resilient estimation is formulated as $0$-norm minimization problem \cite{fawzi2014secure}. But it imposes a restriction of maximum attack percentage of $50\%$ for correct recovery of $\mathbf{x}$. Moreover, since $0$-norm minimization decoder is an NP-hard problem, an alternative $1$-norm minimization decoder has been considered in literature \cite{anubi2019enhanced, fawzi2014secure}, which can be solved by linear programming \cite{candes2005decoding}. The condition to bridge the two decoders is \emph{Restricted Isometry Property} (RIP) \cite{candes2006stable} that defines the sparse recoverability of observation matrix $H$. 

In order to guarantee correct state recoveries for larger percentage of attacks, \emph{prior} information has been considered for resilient estimation scheme in literature: \emph{Measurement Prior} \cite{anubi2019enhanced,anubi2020multi}, \emph{Support Prior} \cite{anubi2018robust} and \emph{State Prior} \cite{shinohara2019resilient}. 
In \cite{shinohara2019resilient}, the author considered the \emph{Prior} information of estimated states in three forms: \emph{sparsity information} of $\mathbf{x}_0$, \emph{$(\alpha, \overline{n}_0)$ sparsity information} where $\alpha$ replaces $0$ in sparsity definition, and \emph{side information} that is knowledge of the initial state from the physical attribution of the system and cannot be manipulated by malicious third parties. 
In \cite{anubi2019enhanced,anubi2020multi}, the author constructed a data-driven auxiliary model between system measurement and auxiliary state by trained \emph{Gaussian Process Regression} (GPR), the attacked measurements are visible to defender if they cannot be explained based on the \emph{measurement model prior} with high likelihood. 
In this paper, we consider \emph{support prior} which gives an estimated set of attack location, and is generated by any of the afore mentioned localization algorithms. However, there are two drawbacks, namely: uncertainty and training price. Thus, we propose a \emph{Pruning} method to improve the precision of the \emph{support prior} without training process. Then a weighted $1$-norm minimization scheme \cite{friedlander2011recovering} is given based on the resulting \emph{pruned support prior}. The pruning idea originates from \cite{anubi2018robust}. Moreover, due to the perfect localization precision of \emph{pruned support prior}, resilient \emph{Unscented Kalman Filter} (UKF) against FDIA was given in \cite{zheng2020attack} by performing UKF based on the pruned safe set.

The remainder of this paper is organized as follows. In \emph{Section \ref{Sec: Model_Development}}, we describe the concurrent models that will be used for the development in subsequent sections, including physical model of CPS, threat model and prior model. In \emph{Section \ref{Sec: Resilient_Observer_Design}}, we develop the pruning methods, and construct a weighted $\ell_1$ observer with \emph{pruned support prior}. A numerical simulation and an application simulation on IEEE-14 bus system show the proposed observer indeed enhance the system resilience in \emph{Section
\ref{Sec: Simulation}}. Finally, conclusion remarks follows in \emph{Section \ref{Sec: Conclusion}}.

\section{MODEL DEVELOPMENT}\label{Sec: Model_Development}
This section discusses the relevant models that will facilitate the development in subsequent sections. Specifically, we consider a physics-driven dynamical model for the physical side, a data-driven threat model for the cyber side and the prior information model.

\subsection{Dynamical Model}
Consider a linear model of CPS given by:
\begin{equation}\label{equ:system_model}
\begin{aligned}
    \mathbf{x}_{i+1} &= A\mathbf{x}_i \\
    \mathbf{y}_i &= C\mathbf{x}_i + \mathbf{e}_i,
    \end{aligned}
\end{equation}
where, $\mathbf{x}_i \in \mathbb{R}^n, \mathbf{y}_i \in \mathbb{R}^m$, with $m >n$, denote state vector and measurement vector at time $i$ respectively, $\mathbf{e}_i \in \mathscr{S}_k^m$ denotes the sparse attack vector, $A,C$ are system dynamic parameters. A control input may be included in the model above. However, since the control input is generally irrelevant to state estimation problems, we suppress in the model considered here. The following assumption is made regarding the CPS model above:
\begin{assumption}
The pair $(A,C)$ is full observable.
\end{assumption}
By iterating the system model \eqref{equ:system_model} $T$ time steps backwards, the $T$ horizon observation model is given by
\begin{equation}\label{equ:measurement_model}
    \mathbf{y}_T = H\mathbf{x}_{i-T+1}+\mathbf{e}_T,
\end{equation}
where $\mathbf{y}_T=[\mathbf{y}_i^{\top} \hspace{2mm} \mathbf{y}_{i-1}^{\top} \cdots \mathbf{y}_{i-T+1}^{\top}]^{\top} \in \mathbb{R}^{Tm}$ is a sequence of observation in the moving window $[i-T+1 \hspace{2mm} i]$, $\mathbf{x}_{i-T+1}\in \mathbb{R}^n$ is the state vector at time $i-T+1$, $\mathbf{e}_T = [\mathbf{e}_i^{\top} \hspace{2mm} \mathbf{e}_{i-1}^{\top} \cdots \mathbf{e}_{i-T+1}^{\top}]^{\top}$ is the sequence of attack vectors in the same moving window with $\mathbf{e}_i \in \mathscr{S}_{k}^m, \forall i \in [i-T+1 \hspace{2mm} i]$ and
$$ H = \begin{bmatrix} C A^{T-1} \\ \vdots \\ C A \\ C \end{bmatrix} =U \begin{bmatrix} \Sigma_1 \\0\end{bmatrix}V^{\top}
$$
where, $U = [U_1 \hspace{2mm} U_2]$, with $U_1 \in {\mathbb R}^{Tm \times n}, U_2 \in {\mathbb R}^{Tm \times Tm-n}$, is a matrix of left singular vectors of $H$, $V \in \mathbb{R}^{n \times n}$ is the corresponding matrix of right singular vectors, and $\Sigma_1 \in \mathbb{R}^{n \times n}$ is a diagonal matrix of the singular values of $H$, which are non-zero since $H$ is full rank.

\subsection{Threat Model}
We begin by defining a state decoder operation and a corresponding residual-based attack detector. Then we give a class of FDIA with guaranteed success against the defined decoder-detector pair. The following assumption is made concerning the attack vector:
\begin{assumption}
The attacker has knowledge of the system dynamics in \eqref{equ:system_model}.
\end{assumption}
\begin{definition}[Decoder]
Given a sequence of observation $\mathbf{y}_T \in \mathbb{R}^{Tm}$, a decoder for the measurement model in \eqref{equ:measurement_model} is a mapping of the form $\mathscr{D}:\mathbb{R}^{Tm} \mapsto \mathbb{R}^n$ given by
\begin{equation}\label{equ:decoder}
    \hat{\mathbf{x}} = \mathscr{D}(\mathbf{y}_T) =V\Sigma_1^{-1}\argmin_{\mathbf{z}}\|\mathbf{y}_T-U_1\mathbf{z}\|_{1}
\end{equation}
\end{definition}

\begin{definition}[Detector]
Given a positive threshold parameter $\epsilon$, a detector for the decoder $\mathscr{D}$ and the measurement model in \eqref{equ:measurement_model} is a binary classifier of the form
\begin{equation}\label{equ:detector}
    \mathscr{D}_{\epsilon}(\mathbf{y}_T) = \left\{
     \begin{array}{lr}
     1 &\quad\text{if}\hspace{0.2cm} \|\mathbf{y}_T - H\mathscr{D}(\mathbf{y}_T)\|_{1} > \epsilon \\
     0 &\quad\text{otherwise}
     \end{array}
     \right.
\end{equation}
\end{definition}

\begin{remark}
The positive class for $\mathscr{D}_{\epsilon}(\cdot)$ is regarded as the set of attacked (unsafe) measurements, while the negative class is regarded as the set of safe measurements. However, the negative class still contains unsafe measurements if the detector is compromised.
\end{remark}

\begin{definition}[Successful FDIA \cite{mo2010false}]\label{Def:Successful_FDIA}
Consider the CPS in \eqref{equ:system_model} and the corresponding measurement model \eqref{equ:measurement_model}, the attack sequence $\mathbf{e}_T\in\mathscr{S}^{Tm}_{km}$ is said to be $(\epsilon, \alpha)$-successful against the decoder-detector pair $\{\mathscr{D}, \mathscr{D}_{\epsilon}\}$ if
\begin{equation}\label{equ:def_success_FDIA}
     \|\mathbf{x}^{\star}-\mathscr{D}(\mathbf{y}_T)\|_2 \geq \alpha, \hspace{2mm} \mathscr{D}_{\epsilon}(\mathbf{y}_T)=0,
\end{equation}
where $\mathbf{y}_T = \mathbf{y}^{\star}_T+\mathbf{e}_T$ with $\mathbf{y}^*_T\in\mathbb{R}^{N}$ the true measurement vector, and $\mathbf{x}^{\star}$ is the true state vector.
\end{definition}
The following theorem gives a mechanism for constructing such $(\epsilon,\alpha)$-successful FDIA if the attack support is pre-determined.

\begin{theorem}\label{Thm:FDIA}
Given the support sequence $\mathscr{T} = \{\mathscr{T}_i \hspace{2mm} \mathscr{T}_{i-1} \cdots \mathscr{T}_{i-T+1}\}$ with $|\mathscr{T}_i|\leq k$. Let $\mathbf{z}_e$ be an optimal solution of the optimization program
\begin{equation}\label{equ:FDIA_optimization}
\begin{aligned}
    \textsf{Maxmize}&: \hspace{0.2cm} \|U_1 \mathbf{z}\|_{2}, \\
    \textsf{Subject to}&: \hspace{0.2cm} \| U_{1,\mathscr{T}^c}\mathbf{z}\|_{2} \leq \frac{\epsilon}{\sqrt{Tm-|\mathscr{T}|}}.
\end{aligned}
\end{equation}
If 
$
\|U_{1,\mathscr{T}^c}\|_2 < \frac{1}{2\sqrt{Tm-|\mathscr{T}|}},
$
  then the FDIA
\begin{equation}\label{equ:FDIA}
    \mathbf{e}_{\mathscr{T}} = (U_{1,\mathscr{T}}) \mathbf{z}_e, \hspace{2mm} \mathbf{e}_{\mathscr{T}^c} = \mathbf{0}
\end{equation} 
is $(\epsilon, \alpha)$-successful against the decoder-detector pair $\{\mathscr{D},\mathscr{D}_\epsilon\}$ for all 
$$ \alpha \leq \frac{\epsilon}{2\sqrt{Tm}\overline{\sigma}}\left(\frac{1}{\overline{\sigma}_{\mathscr{T}^c}\sqrt{Tm-|\mathscr{T}|}} - 2\right),
$$
where $\overline{\sigma}_{\mathscr{T}^c}$ is the largest singular value of $U_{1,\mathscr{T}^c}$ and $\overline{\sigma}$ is the biggest nonzero singular value of $H$.
\end{theorem}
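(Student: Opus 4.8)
The plan is to split the statement into the two requirements of Definition \ref{Def:Successful_FDIA}, namely detector evasion $\mathscr{D}_\epsilon(\mathbf{y}_T)=0$ and the error lower bound $\|\mathbf{x}^{\star}-\mathscr{D}(\mathbf{y}_T)\|_2\ge\alpha$, and to treat them separately. First I would pass to the whitened coordinate $\mathbf{z}^{\star}=\Sigma_1 V^{\top}\mathbf{x}^{\star}$, so that $\mathbf{y}^{\star}_T=H\mathbf{x}^{\star}=U_1\mathbf{z}^{\star}$ and the decoder \eqref{equ:decoder} becomes an $\ell_1$ fit of $\mathbf{y}_T$ against the columns of $U_1$, which are orthonormal ($U_1^{\top}U_1=I$, hence $\|U_1\mathbf{z}\|_2=\|\mathbf{z}\|_2$). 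Writing $q=Tm-|\mathscr{T}|$, I would record the decomposition $\mathbf{e}=U_1\mathbf{z}_e-\mathbf{d}$, where $\mathbf{d}$ is supported on $\mathscr{T}^c$ and equals $U_{1,\mathscr{T}^c}\mathbf{z}_e$ there; the constraint in \eqref{equ:FDIA_optimization} together with Cauchy--Schwarz then gives $\|\mathbf{d}\|_1\le\sqrt{q}\,\|\mathbf{d}\|_2\le\epsilon$, a bound I will reuse throughout.

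For detector evasion I would simply evaluate the decoder objective at the candidate $\mathbf{z}=\mathbf{z}^{\star}+\mathbf{z}_e$: since $\mathbf{y}_T-U_1(\mathbf{z}^{\star}+\mathbf{z}_e)=-\mathbf{d}$, this point attains objective value $\|\mathbf{d}\|_1\le\epsilon$. Because $\mathscr{D}$ minimizes the same objective and $H\mathscr{D}(\mathbf{y}_T)=U_1\hat{\mathbf{z}}$ with $\hat{\mathbf{z}}$ the inner $\argmin$, the optimal residual obeys $\|\mathbf{y}_T-H\mathscr{D}(\mathbf{y}_T)\|_1\le\epsilon$, so \eqref{equ:detector} returns $0$.

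The error bound is the substantive part. I would chain $\|\mathbf{x}^{\star}-\hat{\mathbf{x}}\|_2=\|\Sigma_1^{-1}(\hat{\mathbf{z}}-\mathbf{z}^{\star})\|_2\ge\frac{1}{\overline{\sigma}}\|\hat{\mathbf{z}}-\mathbf{z}^{\star}\|_2=\frac{1}{\overline{\sigma}}\|H\hat{\mathbf{x}}-\mathbf{y}^{\star}_T\|_2\ge\frac{1}{\overline{\sigma}\sqrt{Tm}}\|H\hat{\mathbf{x}}-\mathbf{y}^{\star}_T\|_1$, using orthonormality of $U_1$ and $V$ and the $\ell_2$--$\ell_1$ inequality in $\mathbb{R}^{Tm}$. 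The key trick, which avoids ever computing $\hat{\mathbf{z}}$, is the reverse triangle inequality $\|H\hat{\mathbf{x}}-\mathbf{y}^{\star}_T\|_1=\|\mathbf{e}-(\mathbf{y}_T-H\hat{\mathbf{x}})\|_1\ge\|\mathbf{e}\|_1-\|\mathbf{y}_T-H\hat{\mathbf{x}}\|_1\ge\|\mathbf{e}\|_1-\epsilon$, using $\mathbf{y}^{\star}_T=\mathbf{y}_T-\mathbf{e}$ and the residual bound just established. It then remains to lower bound $\|\mathbf{e}\|_1$: from $\|\mathbf{e}\|_1=\|U_1\mathbf{z}_e\|_1-\|\mathbf{d}\|_1\ge\|\mathbf{z}_e\|_2-\epsilon$ I need a lower bound on the optimum $\|\mathbf{z}_e\|_2$ of \eqref{equ:FDIA_optimization}, which follows by exhibiting a feasible direction: any unit vector scaled to norm $\epsilon/(\overline{\sigma}_{\mathscr{T}^c}\sqrt{q})$ is feasible because $\|U_{1,\mathscr{T}^c}\mathbf{z}\|_2\le\overline{\sigma}_{\mathscr{T}^c}\|\mathbf{z}\|_2$, so the maximum obeys $\|\mathbf{z}_e\|_2\ge\epsilon/(\overline{\sigma}_{\mathscr{T}^c}\sqrt{q})$. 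Assembling the pieces yields $\|\mathbf{x}^{\star}-\hat{\mathbf{x}}\|_2\ge\frac{\epsilon}{\overline{\sigma}\sqrt{Tm}}\left(\frac{1}{\overline{\sigma}_{\mathscr{T}^c}\sqrt{q}}-2\right)$, and the hypothesis $\|U_{1,\mathscr{T}^c}\|_2<1/(2\sqrt{q})$ is exactly what forces the parenthesized factor to be positive, so the claimed $\alpha$ is admissible (the precise leading constant being absorbed in the norm-equivalence slack).

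I expect the main obstacle to be guaranteeing that the $\ell_1$ decoder does not reject the injected bias and recover $\mathbf{x}^{\star}$ outright; the reverse-triangle step is what circumvents a direct, and potentially intractable, analysis of the minimizer $\hat{\mathbf{z}}$, converting the question into the purely geometric lower bound on $\|\mathbf{z}_e\|_2$ that the RIP-like smallness condition on $U_{1,\mathscr{T}^c}$ controls.
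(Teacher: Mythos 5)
Your proposal is correct, and for the substantive half of the theorem it takes a genuinely different route from the paper. On detector evasion the two arguments coincide in essence: both reduce to the observation that the residual at a feasible candidate (your $\mathbf{z}^{\star}+\mathbf{z}_e$) equals the $\mathscr{T}^c$-supported vector $\mathbf{d}=U_{1,\mathscr{T}^c}\mathbf{z}_e$ padded with zeros, whose $\ell_1$ norm is at most $\sqrt{Tm-|\mathscr{T}|}\,\|\mathbf{d}\|_2\le\epsilon$. For the error lower bound, however, the paper explicitly introduces the minimizer in shifted coordinates, $\hat{\mathbf{x}}=V\Sigma_1^{-1}(\mathbf{z}^{\star}-\mathbf{z}_e^{\perp})$, and lower-bounds $\|U_1\mathbf{z}_e^{\perp}\|$ through an optimality comparison $f(\mathbf{z}_e^{\perp})\le f(\mathbf{z}_e^{\perp}-\mathbf{z}_e)$ followed by triangle-inequality bookkeeping on both sides; you instead never touch the minimizer beyond the already-established residual bound, and extract the same quantity via the reverse triangle inequality $\|H\hat{\mathbf{x}}-\mathbf{y}^{\star}_T\|_1\ge\|\mathbf{e}\|_1-\|\mathbf{y}_T-H\hat{\mathbf{x}}\|_1\ge\|\mathbf{e}\|_1-\epsilon$ together with $\|\mathbf{e}\|_1\ge\|\mathbf{z}_e\|_2-\epsilon$ and the feasibility-based lower bound $\|\mathbf{z}_e\|_2\ge\epsilon/(\overline{\sigma}_{\mathscr{T}^c}\sqrt{Tm-|\mathscr{T}|})$, which is the same first step the paper uses. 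Your route is shorter, reuses the evasion bound rather than proving a second variational inequality, and in fact yields the constant $\frac{\epsilon}{\overline{\sigma}\sqrt{Tm}}\bigl(\frac{1}{\overline{\sigma}_{\mathscr{T}^c}\sqrt{Tm-|\mathscr{T}|}}-2\bigr)$, a factor of $2$ stronger than the stated admissible range of $\alpha$, so the theorem follows a fortiori; it also sidesteps a norm-switching step ($\|\cdot\|_1$ versus $\|\cdot\|_2$ on $U_1\mathbf{z}_e^{\perp}$) that the paper's chain glosses over. What the paper's approach buys in exchange is an explicit closed-form description of the decoding error as the projection $V\Sigma_1^{-1}\mathbf{z}_e^{\perp}$, which is informative about where the estimate lands but is not needed for the bound itself.
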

\begin{proof}
Since 
$$\begin{aligned}
    \left\{\mathbf{z}\,\middle\vert\ \|\mathbf{z}\|_{2}\leq \frac{\epsilon}{\overline{\sigma}_{\mathscr{T}^c}\sqrt{Tm-|\mathscr{T}|}} \triangleq a\right\} \subset \\
\left\{\mathbf{z} \,\middle\vert\ \|U_{1,\mathscr{T}^c}\mathbf{z}\|_{2} \leq \frac{\epsilon}{\sqrt{Tm-|\mathscr{T}|}}\right\},
\end{aligned} 
$$
then
$$\begin{aligned}\|U_1 \mathbf{z}_e\|_{2} \geq \max_{\|\mathbf{z}\|_{2} \leq a} \| \mathbf{z}\|_{2} \geq \frac{\epsilon}{\overline{\sigma}_{\mathscr{T}^c}\sqrt{Tm-|\mathscr{T}|}}.
\end{aligned}$$
Also
$$
    \mathbf{y}_T=\mathbf{y}^{\star}_T+P\begin{bmatrix}U_{1,\mathscr{T}} \\ 0\end{bmatrix}\mathbf{z}_e,
$$
with an appropriate permutation matrix $P$ satisfying $U_1 = P\begin{bmatrix}U_{1,\mathscr{T}} \\ U_{1,\mathscr{T}^c}\end{bmatrix}$. Hence, by decoder in \eqref{equ:decoder},
\begin{equation}\label{equ:solution_decoder}
    \begin{aligned}
    \hat{\mathbf{x}} &= V \Sigma_1^{-1} \argmin_{\mathbf{z}}\left\|U_1(\mathbf{z}^{\star}-\mathbf{z}) + P\begin{bmatrix}U_{1,\mathscr{T}}\\ 0\end{bmatrix}\mathbf{z}_e\right\|_{1} \\
&= V\Sigma_1^{-1}(\mathbf{z}^{\star}-\mathbf{z}_e^{\perp}),
\end{aligned}
\end{equation}
where $\mathbf{z}^{\star}=\Sigma_1 V^{\top}\mathbf{x}^{\star}$ corresponds to the true state and $\mathbf{z}_e^{\perp}$ is the projection given by 
$$\mathbf{z}_e^{\perp} =\argmin_{\mathbf{z}}\left\|U_1\mathbf{z} + P\begin{bmatrix}U_{1,\mathscr{T}}\\ 0\end{bmatrix}\mathbf{z}_e\right\|_{1}.
$$
Then\footnote{Let $f(\mathbf{z}) = \left\|U_1\mathbf{z} - P\begin{bmatrix}0\\ U_{1,\mathscr{T}^c}\end{bmatrix}\mathbf{z}_e\right\|_{1}$ which is a convex function.The unique minimizer $\mathbf{z}_e^{\perp}$ satisfies $f(\mathbf{z}_e^{\perp}) \le f(\mathbf{z}_e^{\perp}-\mathbf{z}_e)$.},
$$\begin{aligned}\left\|U_1\mathbf{z}_e^{\perp} + P\begin{bmatrix}U_{1,\mathscr{T}}\\ 0\end{bmatrix}\mathbf{z}_e\right\|_{1} &\leq \left\|U_1\mathbf{z}_e^{\perp} - P\begin{bmatrix}0\\ U_{1,\mathscr{T}^c}\end{bmatrix}\mathbf{z}_e\right\|_1  \\
 \left\|U_{1,\mathscr{T}} \mathbf{z}_e\right\|_1-\|U_1\mathbf{z}_e^{\perp}\|_1  &\leq \|U_1\mathbf{z}_e^{\perp}\|_1+\left\|U_{1,\mathscr{T}^c} \mathbf{z}_e\right\|_1   \\
 2\|U_1\mathbf{z}_e^{\perp}\|_2 &\geq \left\|U_{1,\mathscr{T}} \mathbf{z}_e\right\|_1-\left\|U_{1,\mathscr{T}^c} \mathbf{z}_e\right\|_1 \\
\end{aligned}$$
Thus,
$$\begin{aligned}\|U_1\mathbf{z}_e^{\perp}\|_2 &\geq \frac{1}{2}\left(\left\|U_{1} \mathbf{z}_e\right\|_2-2\sqrt{Tm-|\mathscr{T}|}\left\|U_{1,\mathscr{T}^c} \mathbf{z}_e\right\|_2\right) \\
&\geq \frac{1}{2}\left(\frac{\epsilon}{\overline{\sigma}_{\mathscr{T}^c}\sqrt{Tm-|\mathscr{T}|}}-2\epsilon\right)
\end{aligned}$$
Next, according to the solution in \eqref{equ:solution_decoder}, it follows
$$\begin{aligned} \mathbf{x}^{\star} - \hat{\mathbf{x}} &= V \Sigma_1^{-1}\mathbf{z}_e^{\perp}\\
H(\mathbf{x}^{\star} - \hat{\mathbf{x}}) &= U_1 \mathbf{z}_e^{\perp}\\
\overline{\sigma} \|\mathbf{x}^{\star} - \hat{\mathbf{x}}\|_2 &\geq \frac{1}{\sqrt{Tm}}\|U_1 \mathbf{z}_e^{\perp}\|_1\\
\|\mathbf{x}^{\star} - \hat{\mathbf{x}}\|_2 &\geq \frac{1}{2\overline{\sigma}\sqrt{Tm}}\left(\frac{\epsilon}{\overline{\sigma}_{\mathscr{T}^c}\sqrt{Tm-|\mathscr{T}|}}-2\epsilon\right)
\end{aligned}$$
Since $
\|U_{1,\mathscr{T}^c}\|_2 < \frac{1}{2\sqrt{Tm-|\mathscr{T}|}},
$ the lower bound in the above inequality is positive. 
Moreover,
$$\begin{aligned}
\|\mathbf{y}_T &-H\hat{\mathbf{x}}\|_{1} =\left\|U_1\mathbf{z}^{\star}+P\begin{bmatrix}U_{1,\mathscr{T}}\\ 0\end{bmatrix}\mathbf{z}_e-H\hat{\mathbf{x}}\right\|_{1} \\
&=\left\|U_1\mathbf{z}^{\star}+P\begin{bmatrix}U_{1,\mathscr{T}}\\ 0\end{bmatrix}\mathbf{z}_e-U_1(\mathbf{z}^{\star}-\mathbf{z}_e^{\perp})\right\|_{1}\\
& = \left\|U_1\mathbf{z}_e^{\perp}+P\begin{bmatrix}U_{1,\mathscr{T}}\\ 0\end{bmatrix}\mathbf{z}_e\right\|_{1} \\
&\leq\sqrt{Tm-|\mathscr{T}|}\left\|U_{1,\mathscr{T}^c} \mathbf{z}_e\right\|_2 \leq \epsilon
\end{aligned}$$
\end{proof}

\subsection{Prior Model}
The \emph{prior} information considered in this paper is an uncertain estimate of the support of the attack vector. There are many Machine Learning algorithms for estimating the location of attacks in a CPS \cite{ozay2015machine,mao2017least}. We will refer to such algorithm as \emph{localization algorithm} (or \emph{localization oracle}), and the resulting support estimate as \emph{support prior}. 

Let $\mathscr{T}\subseteq\left\{1,2,\hdots, Tm\right\}$ be the actual support of the attacked nodes with the vector $\mathbf{q}\in\{0,\hspace{2mm}1\}^{Tm}$ the corresponding indicator
\begin{equation} \label{equ:support_indicator}
     \mathbf{q}_{i}=\left\{
     \begin{array}{lr}
     0 &\quad\text{if}\hspace{0.2cm} i \in \mathscr{T} \\
     1 &\quad\text{otherwise}
     \end{array}
     \right.
 \end{equation}
 
 
 Let $\hat{\mathscr{T}}$, be an estimate of $\mathscr{T}$, with the corresponding indicator $\hat{\mathbf{q}}\in\{0,1\}^{Tm}$ defined similarly to \eqref{equ:support_indicator}. Then an uncertainty model is defined as
  \begin{equation} \label{equ:uncertainty_model}
     \mathbf{q}_{i}=\epsilon_{i}\hat{\mathbf{q}}_{i}+(1-\epsilon_{i})(1-\hat{\mathbf{q}}_{i})
 \end{equation}
 where $\epsilon_i \sim \mathscr{B}(1,\mathbf{p}_i)$, with known $\mathbf{p}_i \in (0,1]$.
 
\begin{definition}[Positive Prediction Value, Precision, $\textsf{PPV}$]\label{Def:PPV_def}
Given the indicator vector estimate $\hat{\mathbf{q}}\in\{0,1\}^{Tm}$ of the unknown attack support indicator $\mathbf{q}\in\{0,1\}^{Tm}$, $\textsf{PPV}$ is the proportion of $\mathbf{q}$ that is correctly identified in $\hat{\mathbf{q}}$. It is given by
\begin{equation}\label{equ:PPV_def}
\textsf{PPV} =  \frac{\|\mathbf{q} \circ \hat{\mathbf{q}} \|_{0}}{\|\hat{\mathbf{q}}\|_{0}} 
\end{equation}
\end{definition}

\section{RESILIENT OBSERVER DESIGN} \label{Sec: Resilient_Observer_Design}

In this section, the pruning method is developed based on any uncertain estimate of \emph{support prior} from underlying \emph{machine learning} localization algorithm, then the generated \emph{pruned support prior} with precision guarantee is included in a \emph{weighted $1$-norm minimization} scheme.
\subsection{Prior Pruning}
Based on the knowledge of uncertainty of \emph{prior}, a \emph{pruned support prior} $\hat{\mathscr{T}}_{\eta}^c$ is generated in two steps; first, an estimated safe nodes support vector is returned by the below \emph{offline} optimization program, given a reliability level $\eta \in (0,1)$,
\begin{equation}
    \begin{aligned}
    \textsf{Maximize} &\hspace{2mm} |\hat{\mathscr{I}}| \\
    \textsf{Subject to}& \hspace{2mm} \prod\limits_{i\in \hat{\mathscr{I}}}\mathbf{p}_i \geq \eta\\
    &\hspace{2mm}\mathscr{I}\in\left\{1,2,\hdots,Tm\right\}.
    \end{aligned}
\end{equation}

Then a \emph{pruned support prior} is obtained \emph{online} through a robust extraction:
\begin{align}\label{equ:pruning_alg}
    \hat{\mathscr{T}}_\eta^c = \mathscr{I}\cap\hat{\mathscr{T}}^c.
\end{align}

\begin{lemma}\label{Lem:PPV_1}
Given a \emph{support prior} $\hat{\mathscr{T}}$ generated by any underlying localization algorithm with associated uncertainty model in \eqref{equ:uncertainty_model}, if a \emph{pruned support prior} $\hat{\mathscr{T}}_{\eta}^c$ is generated by \eqref{equ:pruning_alg}, then it satisfies
\begin{equation}
    \textsf{Pr}\{\textsf{PPV}_{\eta} = 1\} \geq \eta,
\end{equation}
where $\textsf{PPV}_{\eta}$ is the precision of $\hat{\mathscr{T}}_{\eta}^c$ defined by \eqref{equ:PPV_def}.
\end{lemma}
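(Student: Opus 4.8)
The plan is to reduce the event $\{\textsf{PPV}_{\eta} = 1\}$ to an intersection of independent Bernoulli events and then invoke the feasibility constraint $\prod_{i \in \mathscr{I}} \mathbf{p}_i \geq \eta$ of the offline pruning program (where $\mathscr{I}$ denotes its optimal set). First I would record that the pruned safe set $\hat{\mathscr{T}}_{\eta}^c = \mathscr{I} \cap \hat{\mathscr{T}}^c$ is deterministic once the localization oracle has reported $\hat{\mathscr{T}}$: the set $\mathscr{I}$ is fixed offline from the known probabilities $\mathbf{p}_i$, so the only randomness entering the precision is carried by the correctness indicators $\epsilon_i$ of the uncertainty model \eqref{equ:uncertainty_model}.

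Next I would unfold the definition of $\textsf{PPV}_{\eta}$. Writing $\hat{\mathbf{q}}_{\eta}$ for the indicator of $\hat{\mathscr{T}}_{\eta}^c$, equation \eqref{equ:PPV_def} gives $\textsf{PPV}_{\eta} = \|\mathbf{q} \circ \hat{\mathbf{q}}_{\eta}\|_{0} / \|\hat{\mathbf{q}}_{\eta}\|_{0}$, so $\textsf{PPV}_{\eta} = 1$ holds exactly when $\mathbf{q}_i = 1$ for every $i \in \hat{\mathscr{T}}_{\eta}^c$. The crucial local computation is that each such $i$ lies in $\hat{\mathscr{T}}^c$, whence $\hat{\mathbf{q}}_i = 1$, and substituting this into \eqref{equ:uncertainty_model} collapses the model to $\mathbf{q}_i = \epsilon_i$. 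Consequently the event $\{\textsf{PPV}_{\eta} = 1\}$ coincides with $\bigcap_{i \in \hat{\mathscr{T}}_{\eta}^c} \{\epsilon_i = 1\}$.

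The final step is probabilistic. Since the $\epsilon_i \sim \mathscr{B}(1,\mathbf{p}_i)$ are mutually independent, I would factor $\textsf{Pr}\{\textsf{PPV}_{\eta} = 1\} = \prod_{i \in \hat{\mathscr{T}}_{\eta}^c} \mathbf{p}_i$. Because $\hat{\mathscr{T}}_{\eta}^c = \mathscr{I} \cap \hat{\mathscr{T}}^c \subseteq \mathscr{I}$ and every factor satisfies $\mathbf{p}_i \in (0,1]$, dropping the factors indexed by $\mathscr{I} \setminus \hat{\mathscr{T}}_{\eta}^c$ can only increase the product, giving $\prod_{i \in \hat{\mathscr{T}}_{\eta}^c} \mathbf{p}_i \geq \prod_{i \in \mathscr{I}} \mathbf{p}_i \geq \eta$, which is exactly the offline constraint. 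Chaining these relations yields the claimed bound.

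I expect the main obstacle to be conceptual rather than computational: correctly recognizing that, restricted to estimated-safe indices, the uncertainty model makes \emph{truly safe} synonymous with $\{\epsilon_i = 1\}$, and being careful that the monotonicity step leans on the inclusion $\hat{\mathscr{T}}_{\eta}^c \subseteq \mathscr{I}$ together with $\mathbf{p}_i \leq 1$. I would also check the degenerate case $\|\hat{\mathbf{q}}_{\eta}\|_{0} = 0$, in which the pruned set is empty, $\textsf{PPV}_{\eta} = 1$ holds vacuously (the empty product equals $1 \geq \eta$), and the bound is trivially satisfied.
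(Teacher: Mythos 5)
Your proposal is correct and follows essentially the same route as the paper's proof: both reduce the event $\{\textsf{PPV}_{\eta}=1\}$ to all the correctness indicators $\epsilon_i$ over $\hat{\mathscr{T}}_{\eta}^c$ equalling one (the paper phrases this as $\textsf{PPV}_{\eta}=\tfrac{1}{|\hat{\mathscr{T}}_{\eta}^c|}\sum_{i\in\hat{\mathscr{T}}_{\eta}^c}\epsilon_i$ attaining its maximum), factor the probability by independence into $\prod_{i\in\hat{\mathscr{T}}_{\eta}^c}\mathbf{p}_i$, and conclude via $\hat{\mathscr{T}}_{\eta}^c\subseteq\mathscr{I}$ and $\mathbf{p}_i\le 1$. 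Your explicit treatment of the degenerate empty-set case is a small bonus the paper omits.
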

\begin{proof}
From \eqref{equ:uncertainty_model}, it follows 
$\mathbf{q}_i \hat{\mathbf{q}}_i = \epsilon_i \hat{\mathbf{q}}_i,
$
which implies
$$
\textsf{PPV} =  \frac{\|\mathbf{q} \circ \hat{\mathbf{q}} \|_{0}}{\|\hat{\mathbf{q}}\|_{0}} = \frac{\sum_{i=1}^N \mathbf{q}_i \hat{\mathbf{q}}_i}{\sum_{i=1}^N \hat{\mathbf{q}}_i}
=\frac{1}{|\hat{\mathscr{T}}^c|}\sum_{i=1}^N \epsilon_i \hat{\mathbf{q}}_i = \frac{1}{|\hat{\mathscr{T}}^c|}\sum_{i \in \hat{\mathscr{T}}^c}\epsilon_i
$$
Similarly, 
$$ \textsf{PPV}_{\eta} = \frac{1}{|\hat{\mathscr{T}}^c_{\eta}|}\sum_{i \in \hat{\mathscr{T}}^c_{\eta}}\epsilon_i.
$$
Then, since $\hat{\mathscr{T}}_\eta^c\subseteq\mathscr{I}$ and $0<\mathbf{p}_i\le1$,
$$\textsf{Pr}\{\textsf{PPV}_{\eta} = 1\} = \textsf{Pr}\left\{\sum_{i \in \hat{\mathscr{T}}^c_{\eta}}\epsilon_i =|\hat{\mathscr{T}}^c_{\eta}|\right\} = \prod_{i\in \hat{\mathscr{T}}_{\eta}^c}\mathbf{p}_i \geq \eta
$$
\end{proof}

\subsection{Weighted $\ell_1$ Observer}
Consider the CPS in \eqref{equ:system_model}. Given a \emph{pruned support prior} $\hat{\mathscr{T}}^{c}_{\eta}$ in \eqref{equ:pruning_alg}, the weighted $\ell_1$ observer is given by the following moving horizon estimation problem:
\begin{equation}\label{equ:MHE_weighted_l1_minimization}
    \begin{aligned}
    \textsf{Minimize} &\hspace{2mm} \sum_{p=i-T+1}^i \|\mathbf{y}_p-C\mathbf{z}_p\|_{1,w}\\
    \textsf{Subject to}& \hspace{2mm} \mathbf{z}_{p+1} -A\mathbf{z}_p =0, \\
    &\hspace{10mm} p = i-T+1, \cdots,i-1 \\
    &\hspace{2mm} w = \left\{\begin{array}{ll}
1, & j \in \hat{\mathscr{T}}^{c}_{\eta} \\
\omega, & j \in \hat{\mathscr{T}}_{\eta}
\end{array}\right.
    \end{aligned}
\end{equation}
where, $0\leq \omega \leq 1$, $\|\mathbf{z}\|_{1, \mathrm{w}} \triangleq \sum_j \mathrm{w}_j |\mathbf{z}_j|$ is the weighted $1$-norm.

\begin{remark}
Consider the corresponding measurement model \eqref{equ:measurement_model}, then an equivalent form of \eqref{equ:MHE_weighted_l1_minimization} is given by
 \begin{equation}\label{equ:weighted_l1_minimization}
\begin{aligned}
\Minimize_{\mathbf{z}\in \mathbb{R}^n}\hspace{1mm} \|\mathbf{y}_T - H \mathbf{z}\|_{1, \mathrm{w}}, \hspace{1mm} \text{with} \hspace{1mm}w = \left\{\begin{array}{ll}
1, & j \in \hat{\mathscr{T}}^{c}_{\eta} \\
\omega, & j \in \hat{\mathscr{T}}_{\eta}
\end{array}\right.
\end{aligned}    
\end{equation}
\end{remark}

\begin{theorem}\label{Thm:recovery_Withpruning}
Consider the CPS measurement model in \eqref{equ:measurement_model}, such that $U_2^{\top}$ satisfies the RIP condition
\begin{align}\label{equ: RIP}
   (1-\delta_{km})\|\mathbf{h}\|_{2}^2 \leq \|U_{2}^{\top}\mathbf{h}\|_{2}^2 \leq (1+\delta_{km})\|\mathbf{h}\|_{2}^2 
 \end{align}
for all sparse vector $\mathbf{h} \in \mathscr{S}^{Tm}_{km}$.
Given a \emph{support prior} $\hat{\mathscr{T}}^c$ generated by an underlying localization algorithm with associated uncertainty model in \eqref{equ:uncertainty_model}.
Let $\hat{\mathscr{T}}_{\eta}^c$ be the \emph{pruned support prior} given by \eqref{equ:pruning_alg} with a parameter $\eta \in (0,1)$. If there exists an integer $a\geq \max\left\{\rho-1,1\right\}$, where $\rho = \frac{|\hat{\mathscr{T}}_{\eta}|}{km}$, such that 
\begin{equation}
    \delta_{akm}+C\delta_{(a+1)km} \leq C-1
\end{equation} 
then, with a probability of at least $\eta$, the estimation error due to the resilient observer in \eqref{equ:weighted_l1_minimization} can be upper bounded as
\begin{equation} \label{equ:error_bound3}
    \left\|\hat{\mathbf{x}}-\mathbf{x}^*\right\|_{2} \leq 
   \frac{C_{1}}{\underline{\sigma}\sqrt{km}}\left(\omega\sigma_{km}(\mathbf{e})+(1-\omega)\|\mathbf{e}_{\hat{\mathscr{T}}_{\eta}^c} \|_{1}\right),
\end{equation}
where $\underline{\sigma}$ is the smallest singular value of $H$, and 
$$\begin{aligned} C_1 &= \frac{2a^{-\frac{1}{2}}(\sqrt{1-\delta_{(a+1)km}}+\sqrt{1+\delta_{a km}})}{\sqrt{1-\delta_{(a+1)km}}-\frac{1}{C}\sqrt{1+\delta_{akm}}}, \\  C &= \frac{a}{(\omega+(1-\omega)\sqrt{\rho -1})^2}.
\end{aligned}$$
\end{theorem}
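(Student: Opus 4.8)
The plan is to reduce the weighted $\ell_1$ observer \eqref{equ:weighted_l1_minimization} to a standard weighted $\ell_1$ sparse-recovery problem by projecting onto the left null space of $H$, then to invoke the RIP-based weighted recovery machinery of \cite{friedlander2011recovering}, and finally to transfer the resulting attack-space bound back to state-space through the smallest singular value of $H$. First I would eliminate the state: since $U_2^{\top}H=0$ and $\mathrm{range}(H)=\mathrm{null}(U_2^{\top})$, the residual $\mathbf{r}=\mathbf{y}_T-H\mathbf{z}$ ranges over the affine set $\{\mathbf{r}:U_2^{\top}\mathbf{r}=U_2^{\top}\mathbf{y}_T\}=\{\mathbf{r}:U_2^{\top}\mathbf{r}=U_2^{\top}\mathbf{e}\}$, so \eqref{equ:weighted_l1_minimization} is equivalent to $\Minimize\|\mathbf{r}\|_{1,w}$ subject to $U_2^{\top}\mathbf{r}=U_2^{\top}\mathbf{e}$, a weighted $\ell_1$ decoder with sensing matrix $U_2^{\top}$ and true signal $\mathbf{e}$. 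Writing the optimal residual as $\hat{\mathbf{e}}=\mathbf{y}_T-H\hat{\mathbf{x}}$, the recovery error $\mathbf{v}:=\hat{\mathbf{e}}-\mathbf{e}=H(\mathbf{x}^{*}-\hat{\mathbf{x}})$ lies in $\mathrm{null}(U_2^{\top})$, and the identity $\mathbf{v}=H\mathbf{h}$ with $\mathbf{h}=\mathbf{x}^{*}-\hat{\mathbf{x}}$ is what ultimately links the two error quantities.

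Next I would run the weighted RIP recovery argument on $\mathbf{v}$. Let $T_0$ be the support of the $km$ largest-magnitude entries of $\mathbf{e}$, so $\|\mathbf{e}_{T_0^c}\|_1=\sigma_{km}(\mathbf{e})$. Optimality $\|\hat{\mathbf{e}}\|_{1,w}\le\|\mathbf{e}\|_{1,w}$ together with the reverse triangle inequality on $T_0$ and $T_0^c$ yields a weighted cone condition $\|\mathbf{v}_{T_0^c}\|_{1,w}\le\|\mathbf{v}_{T_0}\|_{1,w}+2\|\mathbf{e}_{T_0^c}\|_{1,w}$. I would then partition $T_0^c$ into successive blocks of size $akm$ ordered by decreasing magnitude of $\mathbf{v}$, apply the RIP \eqref{equ: RIP} of $U_2^{\top}$ to each block together with the null-space identity $U_2^{\top}\mathbf{v}=0$, and chain the standard inner-product/RIP estimates. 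The weight $\omega$ and the ratio $\rho=|\hat{\mathscr{T}}_\eta|/(km)$ enter exactly here: converting the weighted cone bound into an unweighted $\ell_2$ bound introduces the factor $(\omega+(1-\omega)\sqrt{\rho-1})$, which packages into the constant $C$ and produces both the admissibility condition $\delta_{akm}+C\delta_{(a+1)km}\le C-1$ and the leading constant $C_1$. The output of this step is the attack-space estimate $\|\mathbf{v}\|_2\le \frac{C_1}{\sqrt{km}}\|\mathbf{e}_{T_0^c}\|_{1,w}$.

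Finally I would bring in the probabilistic guarantee and transfer back to state-space. By Lemma \ref{Lem:PPV_1}, with probability at least $\eta$ the pruned prior has $\textsf{PPV}_\eta=1$, i.e.\ $\hat{\mathscr{T}}_\eta^c$ is entirely safe, equivalently $\mathscr{T}\subseteq\hat{\mathscr{T}}_\eta$ so that $T_0\subseteq\hat{\mathscr{T}}_\eta$ sits inside the low-weight region; this is precisely the support containment under which the constant $C$ above is valid. Expanding the weighted tail gives $\|\mathbf{e}_{T_0^c}\|_{1,w}=\omega\sigma_{km}(\mathbf{e})+(1-\omega)\|\mathbf{e}_{T_0^c\cap\hat{\mathscr{T}}_\eta^c}\|_1\le \omega\sigma_{km}(\mathbf{e})+(1-\omega)\|\mathbf{e}_{\hat{\mathscr{T}}_\eta^c}\|_1$, matching the right-hand side of \eqref{equ:error_bound3}. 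From $\mathbf{v}=H\mathbf{h}$ and the singular-value bound $\|H\mathbf{h}\|_2\ge\underline{\sigma}\|\mathbf{h}\|_2$ one obtains $\|\hat{\mathbf{x}}-\mathbf{x}^{*}\|_2=\|\mathbf{h}\|_2\le \frac{1}{\underline{\sigma}}\|\mathbf{v}\|_2$, and substituting the second-step bound completes the argument.

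The hard part will be the second step: carrying the weight $\omega$ correctly through the block-decomposition RIP estimates so that the support ratio appears exactly as $\sqrt{\rho-1}$ and the constants collapse to the stated $C$ and $C_1$. This is a weighted adaptation of the Cand\`es RIP recovery argument, and the bookkeeping linking $\|\mathbf{v}_{T_0}\|_{1,w}$, $\|\mathbf{v}_{T_0}\|_2$, and the cardinalities of $T_0\cap\hat{\mathscr{T}}_\eta$ is delicate; the $\textsf{PPV}_\eta=1$ event is what guarantees the containment $\mathscr{T}\subseteq\hat{\mathscr{T}}_\eta$ that makes those cardinality counts, and hence $\rho$, behave as assumed.
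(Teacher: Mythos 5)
Your proposal is correct and follows essentially the same route as the paper's proof: the reduction to a weighted $\ell_1$ decoder on $\mathrm{null}(U_2^{\top})$, the optimality-based weighted cone condition, the block-wise RIP chaining that produces $C$ and $C_1$, the $\textsf{PPV}_{\eta}=1$ event from Lemma~\ref{Lem:PPV_1} supplying the containment $\mathscr{T}\subseteq\hat{\mathscr{T}}_{\eta}$, and the final transfer to state space through $\underline{\sigma}$. The only cosmetic difference is that you make the null-space decoder with sensing matrix $U_2^{\top}$ explicit up front, whereas the paper works directly with $\mathbf{h}=H(\mathbf{x}^{*}-\hat{\mathbf{x}})$ and verifies $U_2^{\top}\mathbf{h}=0$ in passing.
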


\begin{proof}
To avoid repetition, we state upfront that all claims made in this rest of proof holds with a probability of at least $\eta$. Based on \emph{Lemma \ref{Lem:PPV_1}}, we have $\hat{\mathscr{T}}^{c}_{\eta} \subset \mathscr{T}^c$, $\mathscr{T}\subset \hat{\mathscr{T}}_{\eta}$ under this probability.

Let $\hat{\mathbf{x}}$ be a solution to \eqref{equ:weighted_l1_minimization} with $H\hat{\mathbf{x}} = H\mathbf{x}^*-\mathbf{h}$, then the corresponding minimizer $\hat{\mathbf{e}} = \mathbf{e}+\mathbf{h}$, thus $\|\hat{\mathbf{e}}\|_{1,\omega} \leq \|\mathbf{e}\|_{1,\omega}$. Following the definition of weight $w$ in \eqref{equ:weighted_l1_minimization}, 
$$\omega \|\mathbf{e}_{\hat{\mathscr{T}}_{\eta}}+\mathbf{h}_{\hat{\mathscr{T}}_{\eta}} \|_{1} + \|\mathbf{e}_{\hat{\mathscr{T}}^{c}_{\eta}}+\mathbf{h}_{\hat{\mathscr{T}}^{c}_{\eta}}\|_{1} \leq \omega \|\mathbf{e}_{\hat{\mathscr{T}}_{\eta}}\|_{1}
+ \|\mathbf{e}_{\hat{\mathscr{T}}^c_{\eta}}\|_{1}
$$
Since $1$-norm is decomposable for disjoint sets, such for $\mathscr{T}$ and $\mathscr{T}^c$, and $\hat{\mathscr{T}}^{c}_{\eta} \cap\mathscr{T} =\emptyset $, $\hat{\mathscr{T}}^{c}_{\eta} \cap \mathscr{T}^c =\hat{\mathscr{T}}^{c}_{\eta}$, $\mathscr{T}\cap \hat{\mathscr{T}}_{\eta}=\mathscr{T}$, it follows
$$\begin{aligned} \omega \|\mathbf{e}_{\mathscr{T}}+\mathbf{h}_{\mathscr{T}}\|_{1} +\omega\|\mathbf{e}_{\hat{\mathscr{T}}_{\eta} \cap \mathscr{T}^c}+\mathbf{h}_{\hat{\mathscr{T}}_{\eta} \cap \mathscr{T}^c}\|_{1} + \|\mathbf{e}_{\hat{\mathscr{T}}^{c}_{\eta}}+\mathbf{h}_{\hat{\mathscr{T}}^{c}_{\eta}}\|_{1} \\
\leq \omega\|\mathbf{e}_{\mathscr{T}}\|_{1} +\omega \|\mathbf{e}_{\hat{\mathscr{T}}_{\eta}\cap \mathscr{T}^c}\|_{1} +\|\mathbf{e}_{\hat{\mathscr{T}}^c_{\eta}}\|_{1}
\end{aligned}$$
By triangle inequality, it follows
$$ \omega\|\mathbf{h}_{\hat{\mathscr{T}}_{\eta} \cap \mathscr{T}^c}\|_{1}+\|\mathbf{h}_{\hat{\mathscr{T}}_{\eta}^c}\|_{1} \leq \omega \|\mathbf{h}_{\mathscr{T}}\|_{1} +2\left(\|\mathbf{e}_{\hat{\mathscr{T}}_{\eta}^c}\|_{1}+\omega\|\mathbf{e}_{\hat{\mathscr{T}}_{\eta} \cap \mathscr{T}^c}\|_{1}\right)
$$
Adding and subtracting $\omega\|\mathbf{h}_{\hat{\mathscr{T}}_{\eta}^c \cap \mathscr{T}^c}\|_{1} (= \omega\|\mathbf{h}_{\hat{\mathscr{T}}_{\eta}^c}\|_{1})$ on LHS, $2\omega\|\mathbf{e}_{\hat{\mathscr{T}}_{\eta}^c \cap \mathscr{T}^c}\|_{1} (= 2\omega\|\mathbf{e}_{\hat{\mathscr{T}}_{\eta}^c}\|_{1})$ on RHS, it yields
$$\omega\|\mathbf{h}_{\mathscr{T}^c}\|_{1}+(1-\omega)\|\mathbf{h}_{\hat{\mathscr{T}}_{\eta}^c}\|_{1} \leq  \omega\|\mathbf{h}_{\mathscr{T}}\|_{1} + \beta
$$
where, $\beta =2\left(\omega\|\mathbf{e}_{\mathscr{T}^c}\|_{1}+(1-\omega)\|\mathbf{e}_{\hat{\mathscr{T}}_{\eta}^c}\|_{1}\right)$.
And since $\|\mathbf{h}_{\mathscr{T}^c}\|_{1} = \omega\|\mathbf{h}_{\mathscr{T}^c}\|_{1}+(1-\omega)\|\mathbf{h}_{\hat{\mathscr{T}}_{\eta}^c}\|_{1}+ (1-\omega)\|\mathbf{h}_{\hat{\mathscr{T}}_{\eta} \cap \mathscr{T}^c}\|_{1}$, it yields
\begin{equation}\label{equ2}
\|\mathbf{h}_{\mathscr{T}^c}\|_{1} \leq \omega\|\mathbf{h}_{\mathscr{T}}\|_{1} + (1-\omega)\|\mathbf{h}_{\hat{\mathscr{T}}_{\eta} \cap \mathscr{T}^c}\|_{1}+ \beta
\end{equation}

Next, sort the coefficients of $\mathbf{h}_{\mathscr{T}^c}$ in descending order, and let $\mathscr{T}_j, j\in \{1,2,\cdots\}$ denote $j$th support in $\mathbf{h}_{\mathscr{T}^c}$ with size $akm \in \mathbb{Z}$, where $a>1$. Then $\|\mathbf{h}_{\mathscr{T}_{j}}\|_{2} \leq (akm)^{-1/2}\|\mathbf{h}_{\mathscr{T}_{j}}\|_{\infty}  \leq (akm)^{-1/2} \|\mathbf{h}_{\mathscr{T}_{j-1}}\|_{1}$, let $\mathscr{T}_0 = \mathscr{T} \cup \mathscr{T}_1$, we have
\begin{equation}\label{equ3}
\begin{aligned}
    \|\mathbf{h}_{\mathscr{T}_0^c}\|_{2} &\leq \sum_{j\geq 2}\|\mathbf{h}_{\mathscr{T}_j}\|_{2}\\ &\leq (akm)^{-1/2}\sum_{j\geq 1}\|\mathbf{h}_{\mathscr{T}_j}\|_{1} \\
    &\le  (akm)^{-1/2}\|\mathbf{h}_{\mathscr{T}^c}\|_{1}
    \end{aligned}
\end{equation} 
Note that $|\hat{\mathscr{T}}_{\eta} \cap \mathscr{T}^c|=(\rho-1)km $ since $\textsf{PPV}_{\eta}=1$, and $|\mathscr{T}| =km$, then $\|\mathbf{h}_{\hat{\mathscr{T}}_{\eta} \cap \mathscr{T}^c}\|_{1} \leq \sqrt{(\rho-1)km}\|\mathbf{h}_{\hat{\mathscr{T}}_{\eta} \cap \mathscr{T}^c}\|_{2}$ , $\|\mathbf{h}_{\mathscr{T}}\|_{1} \leq \sqrt{km}\|\mathbf{h}_{\mathscr{T}}\|_{2} \leq \sqrt{km}\|\mathbf{h}_{\mathscr{T}_0}\|_{2}$. Since $a \geq \rho-1$, we obtain $|\hat{\mathscr{T}}_{\eta} \cap \mathscr{T}^c|=(\rho-1)km \leq akm =|\mathscr{T}_1|$, 
then $\|\mathbf{h}_{\hat{\mathscr{T}}_{\eta} \cap \mathscr{T}^c}\|_{2} = \|\mathbf{h}_{\mathscr{T} \cup (\hat{\mathscr{T}}_{\eta} \cap \mathscr{T}^c)}\|_{2} \leq \|\mathbf{h}_{\mathscr{T}_0}\|_{2}$.
Then combine with \eqref{equ2} and \eqref{equ3},
\begin{equation}\label{equ4}
    \|\mathbf{h}_{\mathscr{T}_0^c}\|_{2} \leq  \frac{\omega+(1-\omega)\sqrt{\rho-1}}{\sqrt{a}}\|\mathbf{h}_{\mathscr{T}_0}\|_{2} +\frac{\beta}{\sqrt{akm}}
\end{equation}

Since $ \|U_2^{\top}\mathbf{h}\|_{2}=\|(U_2^{\top}\mathbf{e}^{\star}-U_2^{\top}\mathbf{y}_T)-(U_2^{\top}\mathbf{e}-U_2^{\top}\mathbf{y}_T)\|_{2} =0$, it follows, based on triangle inequality and RIP condition,
$$\begin{aligned} \sqrt{1-\delta_{(a+1)km}}\|\mathbf{h}_{\mathscr{T}_0}\|_{2} \leq \|U_2^{\top}\mathbf{h}_{\mathscr{T}_0}\|_{2} \leq \|U_2^{\top}\mathbf{h}_{\mathscr{T}_0^c}\|_{2},
\end{aligned}$$
and
$$\begin{aligned} \|U_2^{\top}\mathbf{h}_{\mathscr{T}_0^c}\|_{2} \leq 
\sum_{j\geq 2} \|U_2^{\top}\mathbf{h}_{\mathscr{T}_j}\|_{2} 
\leq  \sqrt{1+\delta_{akm}}\sum_{j\geq 2} \|\mathbf{h}_{\mathscr{T}_j}\|_{2}  \\ \leq \frac{\sqrt{1+\delta_{akm}}}{\sqrt{akm}}\sum_{j\geq 1} \|\mathbf{h}_{\mathscr{T}_j}\|_{1} = \frac{\sqrt{1+\delta_{akm}}}{\sqrt{akm}} \|\mathbf{h}_{\mathscr{T}^c}\|_{1},
\end{aligned}$$
then combining with \eqref{equ2} yields
$$\begin{aligned} \sqrt{1-\delta_{(a+1)km}}\|\mathbf{h}_{\mathscr{T}_0}\|_{2} \leq  \omega\frac{\sqrt{1+\delta_{akm}}}{\sqrt{akm}} \|\mathbf{h}_{\mathscr{T}}\|_{1} + \\ (1-\omega)\frac{\sqrt{1+\delta_{akm}}}{\sqrt{akm}} \|\mathbf{h}_{\hat{\mathscr{T}}_{\eta} \cap \mathscr{T}^c}\|_{1} +\frac{\sqrt{1+\delta_{akm}}}{\sqrt{akm}}\beta.
\end{aligned}
$$
Notice, we have $\|\mathbf{h}_{\mathscr{T}}\|_{1}\leq \sqrt{km}\|\mathbf{h}_{\mathscr{T}_0}\|_{2} $ and $\|\mathbf{h}_{\hat{\mathscr{T}}_{\eta} \cap \mathscr{T}^c}\|_{1} \leq \sqrt{(\rho-1)km}\|\mathbf{h}_{\mathscr{T}_0}\|_{2}$, then manipulating the above inequality yields
$$ \|\mathbf{h}_{\mathscr{T}_0}\|_{2} \leq \frac{\frac{\sqrt{1+\delta_{akm}}}{\sqrt{akm}} \beta}{\sqrt{1-\delta_{(a+1)km}}-\frac{\omega+(1-\omega)\sqrt{\rho-1}}{\sqrt{a}}\sqrt{1+\delta_{akm}}}
$$

Since $\|\mathbf{h}\|_{2}\leq \|\mathbf{h}_{\mathscr{T}_0}\|_{2} +\|\mathbf{h}_{\mathscr{T}_0^c}\|_{2} $, combining the above inequality with \eqref{equ4} yields
$$ \|\mathbf{h}\|_{2}\leq \frac{2\frac{\sqrt{1-\delta_{(a+1)km}}+\sqrt{1+\delta_{akm}}}{\sqrt{akm}}\left(\omega\|\mathbf{e}_{\mathscr{T}^c}\|_{1}+(1-\omega)\|\mathbf{e}_{\hat{\mathscr{T}}_{\eta}^c}\|_{1}\right)}{\sqrt{1-\delta_{(a+1)km}}-\frac{\omega+(1-\omega)\sqrt{\rho-1}}{\sqrt{a}}\sqrt{1+\delta_{akm}}}
$$
where $\|\mathbf{e}_{\mathscr{T}^c}\|_{1} = \sigma_{km}(\mathbf{e})$ is best $k$ sparse approximation error of $\mathbf{e}$ defined in \eqref{equ:best_k_sparse_estimation_error}, and the estimation error $\|\hat{\mathbf{x}}-\mathbf{x}^*\|_{2} \leq \underline{\sigma}^{-1}\|\mathbf{h}\|_{2}$. Moreover, in order to ensure that the denominator is positive, the following condition is imposed:
$$ \delta_{akm} + C\delta_{(a+1)km} < C-1,
$$
where $C = \frac{a}{(\omega+(1-\omega)\sqrt{\rho-1})^2}$.
\end{proof}

\section{NUMERICAL SIMULATION}\label{Sec: Simulation}
In this section, a numerical simulation\footnote{we make the numerical simulation be open source at \url{https://github.com/ZYblend/Resilient-Pruning-Observer}} and an application simulation on IEEE-14 bus system are presented. 

\subsection{Numerical Simulation}
In this simulation, we compare the resiliency of three estimation scheme: $\ell_1$ observer without prior, weighted $\ell_1$ observer with prior generated an underlying attack localization algorithm with a true rate of $T_r=0.6$, and weighted $\ell_1$ observer with the pruned prior.
The system dimension is set as $m=20,n=10$, and then a full observable system is generated with random pair $(A,C)$ of independent Gaussian entries \cite{candes2005decoding}. For different attack percentage $P_A$, the FDIA is designed by \eqref{equ:FDIA} on random support $\mathscr{T}$.
By defining "success" as that the estimation error is less than $0.1\%$ of the real state, the success percentage is calculated from $1000$ trials. In Figure~\ref{Sucess_Attack_Tr60_m20_n10}, a performance comparison is presented for varying attack percentages. 
As proved in literature \cite{fawzi2014secure}, \emph{$\ell_1$ observer without prior} cannot work when attack percentage is larger than $1/2$. The prior information can improve the resiliency of the estimator, but the improvement is limited because the precision of the prior information is not enough. By including pruning method, the resiliency is improved a lot.

\begin{figure}[h]
\begin{center}
\includegraphics[scale = 0.35]{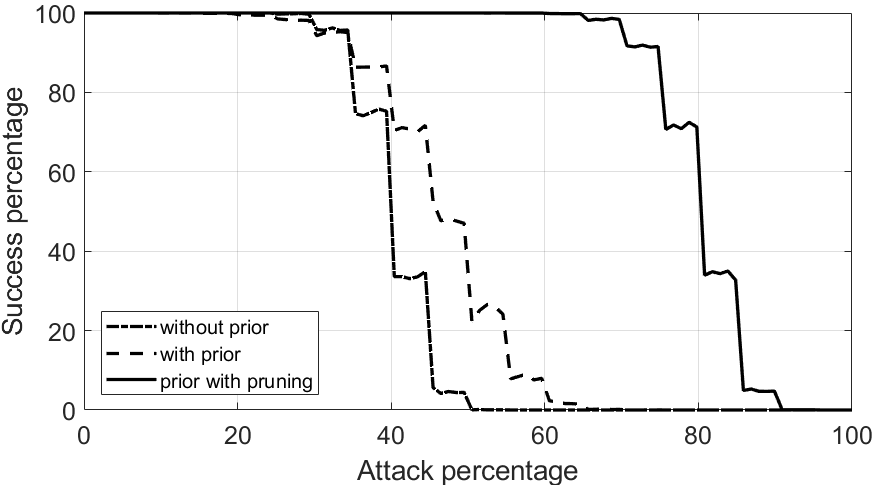}
\end{center}
\caption{A comparison of estimation performance under different attack percentage $P_A \in [0,1]$ between \emph{$\ell_1$ Observer without Prior}, \emph{Weighted $\ell_1$ Observer with Prior}, and \emph{Weighted $\ell_1$ Observer with Prior Pruning}.}
\label{Sucess_Attack_Tr60_m20_n10} 
\end{figure}

\subsection{Application Example}
In this subsection, an additional simulation is carried out for a more realistic IEEE Bus 14 system with a similar setup in \cite{anubi2020multi}. Here, $30\%$ of the measurement nodes are attacked. A comparison of the resulting phase angles estimation for an \emph{$\ell_1$ observer without prior} and the proposed \emph{weighted $\ell_1$ observer with prior pruning} is shown in Figure~\ref{angle_estimation}. Moreover, two comparison metrics for the estimation errors are given in Table~\ref{table_of_error_metric}. The first one is the \emph{root-mean-square} (RMS) value of error, the second one is the \emph{maximum absolute value} of the errors. Since the attack percentage is small, \emph{$\ell_1$ observer without prior} works well. However, there are still notable spikes that may induce closed loop instability. As shown in the Figure, the proposed \emph{weighted $\ell_1$ observer with prior pruning} completely eliminates the spikes!
\begin{table}[h]
\centering
 \caption{Error Metric Values\label{table_of_error_metric}}
 \begin{tabular}{||c | c | c | c | c | c | c||}
\hline
 &\multicolumn{3}{|c|}{RMS Metric}&\multicolumn{3}{|c||}{Max. Ans. Metric} \\
 \hline\hline
 & \textbf{LO} & \textbf{L1O} & \textbf{WL1P} & \textbf{LO} & \textbf{L1O} & \textbf{WL1P}\\
 \hline\hline
 $\delta_1$ & $2.5359$ & $0.0002$ & $0.00004$ & $5.7480$ & $0.0034$ & $0.0005$\\ 

$\delta_2$ & $2.3917$ & $0.0002$ & $0.0001$ & $5.7000$ & $0.0016$ & $0.0016$ \\

 $\delta_3$ & $2.6353$ & $0.0012$ & $0.0001$ & $7.5232$ & $0.0215$ & $0.0013$ \\

 $\delta_4$ & $2.3685$ & $0.0006$ & $0.0005$ & $5.7236$ & $0.0063$ & $0.0052$ \\
 
 $\delta_5$ & $2.6638$ & $0.0007$ & $0.0001$ & $7.8757$ & $0.0085$ & $0.0016$ \\  
 \hline \hline
 \multicolumn{7}{||c||}{\textbf{LO}: Luenberger observer, \textbf{L1O}: $\ell_1$ observer without prior} \\
\multicolumn{7}{||c||}{\textbf{WL1P}: Weighted $\ell_1$ observer with prior pruning} \\
\hline
\end{tabular}
\end{table}

\begin{figure}[t]
\begin{center}
\includegraphics[scale = 0.3]{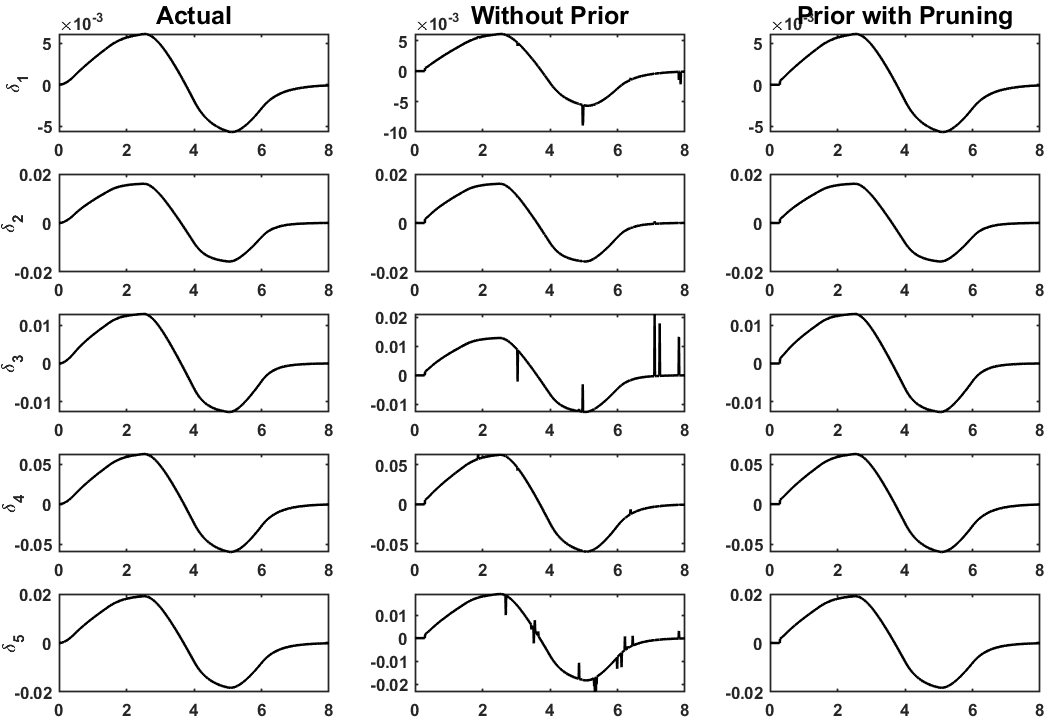}
\end{center}
\caption{Angle estimation for IEEE-14 bus system under FDIA, "without Prior" means \emph{$\ell_1$ Observer without Prior}, "Prior with Pruning" means \emph{Weighted $\ell_1$ Observer with Prior Pruning}.}
\label{angle_estimation} 
\end{figure}

\section{CONCLUSION}\label{Sec: Conclusion}
This paper proposed a weighted $\ell_1$ observer with prior pruning scheme against FDIAs. The pruning method gives a method to improve localization precision of any underlying localization algorithm without training effort. Moreover, the weighted $\ell_1$ observer with prior pruning is capable of coping with high-percentage of attacks among measurement nodes, which relaxes the transitional restriction on the maximum attack percentage for resilient $\ell_1$ observer, thereby improve the resiliency of systems.







\medskip
\printbibliography

\end{document}